\newcommand\blfootnote[1]{%
  \begingroup
  \renewcommand\thefootnote{}\footnote{#1}%
  \addtocounter{footnote}{-1}%
  \endgroup
}
\definecolor{header1}{cmyk}{0,0,0,1}
\def \k {\mathbbm{k}}
\def \dim {\operatorname{dim}}
\def \GL {\operatorname{GL}}
\def \R {\mathbbm{R}}
\def \C {\mathbbm{C}}
\numberwithin{equation}{section}
\numberwithin{table}{section}
\numberwithin{equation}{section}
\newtheorem{theorem}{Theorem}[section]
\newtheorem{corollary}[theorem]{Corollary}
\newtheorem{definition}[theorem]{Definition}
\newtheorem{example}[theorem]{Example}
\newtheorem{remark}[theorem]{Remark}
\title{\vspace{-.125in}{\huge\selectfont \textbf{Simultaneous block diagonalization of \\ symmetric matrices via congruence}}\vspace{-.075in}}
\author{\normalsize{Lishan Fang$^{1}$, Hua-Lin Huang$^{1*}$, Jiayan Huang$^{1}$}\\
\footnotesize{$^1$ School of Mathematical Sciences, Huaqiao University, Quanzhou 362021, China}
}
\date{}
\begin{document}
\maketitle



\blfootnote{$^*$ Corresponding author: hualin.huang@hqu.edu.cn}
\vspace{-.2in}
\begin{abstract}
This article studies canonical forms derived from the finest simultaneous block diagonalization of a set of symmetric matrices via congruence. Our technique relies on the center theory of a set of multivariate polynomials, which we adapt for a set of symmetric matrices. We establish a bijective relationship between the simultaneous block diagonalizations of these matrices via congruence and complete sets of orthogonal idempotents of their centers. Based on this framework, we provide an algorithm relying primarily on standard linear algebra tasks. We also extend this technique to the simultaneous block diagonalizations of a set of Hermitian matrices via $*$-congruence. In addition, we investigate the simultaneous orthogonal block diagonalizations for real matrices and the simultaneous unitary block diagonalizations for Hermitian matrices. Several examples are provided to demonstrate its effectiveness.
\end{abstract}

\textbf{Keywords}: simultaneous diagonalization via congruence, center, symmetric matrices, Hermitian matrices

\textbf{MSC}: 15A69, 15A20, 13P05

\section{Introduction}\label{sec:intro}

\subsection{Problem statement}\label{sec:problem}

Let $\k$ be a field of characteristic $\ne 2$ and $A_1, A_2, \ldots, A_m \in \k^{n \times n}$ be a set of symmetric matrices. We are interested in determining the canonical forms of the finest simultaneous block diagonalization
\begin{align*}
	P^{T}A_iP&=\begin{pmatrix}
	   B_{i1}&&\\
    	&\ddots&\\
		&&B_{it}
	\end{pmatrix}
\end{align*}
via congruence for all $1 \le i \le m$, where $P\in \operatorname{GL}_n(\k)$ is an invertible matrix, $B_{ij}\in \k^{n_j \times n_j}$ and $\sum_{j=1}^{t}n_{j}=n$. This problem is called simultaneous block diagonalization via congruence (SBDC) of a set of symmetric matrices. If $t=n$, $A_1, A_2, \ldots, A_m$ are simultaneously diagonalized and it is referred to as simultaneous diagonalization via congruence (SDC). The SBDC of multiple symmetric matrices is a classical problem in linear algebra and has been of interest to many researchers. By reformulating the original problem into one with less complexity, large-scale problems may be solved more efficiently. Thus, the SBDC has found applications in many areas, including quadratic programming, variational analysis and signal processing, see~\cite{bustamante2020solving, jiang2016simultaneous, vollgraf2006quadratic} and references therein.

Let $q_1, q_2,\ldots,q_m \in \k[x_1,x_2,\ldots,x_n]$ be a set of quadratic forms defined by $q_i(x)=x^T A_i x$. It is of interest to find a change of variables $x=Py$ with $P\in \operatorname{GL}_n(\k)$ such that
\[ 
    q_i(Py)=q_{i1}(y_1, \dots, y_{a_1})+q_{i2}(y_{a_1+1}, \dots, y_{a_2}) + \dots + q_{it}(y_{a_{t-1}+1}, \dots, y_n)
\] 
for all $1 \le i \le m$. This is called the simultaneous direct sum decomposition or simultaneous diagonalization if $t=n$ for $q_1, q_2,\ldots,q_m.$ Due to the well-established equivalence between symmetric matrices and quadratic forms, the SDC and SBDC of a set of symmetric matrices correspond directly to the simultaneous diagonalization and direct sum decompositions of the associated set of quadratic forms. Specifically, the problem is equivalent to determining a change of variables $x=Py$ such that
\[
        q_i(Py) = (Py)^T A_i (Py) = y^T (P^{T}A_iP) y,
\]
where $P^{T}A_iP$ are block diagonal matrices, as defined above.

\subsection{Previous work}\label{sec:related}

As a special case, the SDC is less complex and has been studied thoroughly. Early work on the SDC of two real symmetric matrices was initiated by Weierstrass~\cite{weierstrass1868zur} in 1868, and pairs of quadratic forms were developed by authors like Finsler~\cite{finsler1936uber} and Calabi~\cite{calabi1964linear}. In 2007, Hiriart-Urruty~\cite{hiriart2007potpourri} posed an open question about the SDC of a set of real symmetric matrices. Until recently, Jiang and Li~\cite{jiang2016simultaneous} extended the SDC to more than two real symmetric matrices, which was further investigated by~\cite{nguyen2020simultaneous}. Some papers, such as~\cite{auyeung1970necessary, hong1986reduction, lancaster2005canonical}, considered the SDC of two Hermitian matrices, and it was expanded to a set of Hermitian matrices in~\cite{le2022simultaneous}. In more recent work~\cite{bustamante2020solving}, authors tackled the SDC for a set of general complex symmetric matrices. Additionally, some researchers explored the SDC using quadratic forms associated with symmetric matrices. The SDC of two quadratic forms has been approached in~\cite{becker1980necessary, hestenes1940theorem, wonenburger1966simultaneous}, and was extended to the almost SDC for a set of quadratic forms in~\cite{wang2024notion}.

Compared to the SDC, which poses strict constraints on the matrices, the SBDC concerns the canonical forms of general sets of symmetric matrices and is much more difficult to solve. Uhlig~\cite{uhlig1973simultaneous} generalized the SDC of two real symmetric matrices to the SBDC of two real symmetric matrices based on their Jordan normal forms. His later papers~\cite{uhlig1976canonical, uhlig1979recurring} further examined canonical pair forms of two real symmetric matrices and provided a new derivation. Until now, the SBDC of more than two symmetric matrices or Hermitian matrices has not been explored and remains a challenging problem. Furthermore, while canonical forms for a single matrix under restricted transformations like orthogonal congruence and unitary $*$-congruence are known~\cite{hong1989canonical, horn2009canonical}, extending the SBDC to a set of matrices under these transformations presents an even more complex challenge.

In \cite{fang2025simultaneous}, the authors extended Harrison's center theory \cite{harrison1975grothendieck} and applied it to the simultaneous direct sum decompositions of a general set of multivariate polynomials. Such investigation for a single homogeneous polynomial has been carried out in~\cite{huang2021diagonalizable, huang2021centres}. The problem of simultaneously decomposing a set of polynomials is connected to the simultaneous Waring decompositions of multiple polynomials~\cite{carlini2003waring, tiels2013coupled}. Authors~\cite{decuyper2019decoupling, dreesen2018decoupling, dreesen2015decoupling} then developed tensor decomposition-based techniques to simultaneously decompose several multivariate polynomials into sums of univariate polynomials. On the one hand, if we focus on a set of quadratic forms, then the idea and method of \cite{fang2025simultaneous} can be refined to the present SBDC problem. On the other hand, there is a surprising equivalence between the simultaneous direct sum decomposition of a general set of multivariate polynomials and the SBDC of a set of symmetric matrices. This is explained in the next subsection with necessary notations.



\subsection{Methods and results}\label{sec:method}

It has been established in \cite{fang2025simultaneous} that a simultaneous direct sum decomposition of several forms corresponds to an additive decomposition of the unit of their center algebra into orthogonal idempotents. A criterion and an algorithm, mostly based on standard linear algebra tasks, were provided to simultaneously decompose arbitrary forms. The center of a set of multivariate polynomials $f_i(x_1, x_2, \dots, x_n) \in \k[x_1, x_2, \dots, x_n]$ for $1 \le i \le m$ is defined as
\begin{equation}\label{eqn:center_poly_multi}
    Z(f_1, f_2, \dots, f_m) := \{X \in \k^{n \times n} \mid (H_{f_i}X)^T=H_{f_i}X, \ 1 \le i \le m \},
\end{equation} 
where $H_{f_i}$ is the Hessian matrix of $f_i$. It was proved that $Z(f_1, f_2, \dots, f_m)$ is a special Jordan algebra, and it was applied to the simultaneous direct sum decompositions of $f_1, f_2, \ldots, f_m$. The main idea of this paper is to restrict the polynomials to a set of quadratic forms and apply the machinery of \cite{fang2025simultaneous} to the SBDC problem to derive more precise results.

As a critical foundation, the center of a set of quadratic forms defined in Equation~\eqref{eqn:center_poly_multi} is reformulated into that of a set of symmetric matrices. We prove a bijective relationship between the SBDC of a set of symmetric matrices and complete sets of orthogonal idempotent elements of their center. Furthermore, we also address simultaneous orthogonal block diagonalization for real symmetric matrices. A simple algorithm and several examples are provided. In addition, we generalize our approach to the SBDC of a set of Hermitian matrices, including the special case of simultaneous unitary block diagonalization. It is worthy of remarking that the center of a set of multivariate polynomials is essentially the center of a set of symmetric matrices, and thus direct sum decompositions of arbitrary polynomials amount to the SBDC problem of symmetric matrices. Indeed, if we write the Hessian matrix of $f_i$ as a sum of scalar symmetric matrices multiplied by suitable monomials, then the equation in \eqref{eqn:center_poly_multi} can be rewritten as the one in \eqref{eqn:center_multiple}, see Definition \ref{def:center_matrix_multi}. Therefore, the SBDC problem includes many important problems as special cases, and our approach provides a highly unified solution to all of them.

\subsection{Organization of the paper}\label{sec:organize}

The remainder of this paper is organized as follows.
In Section~\ref{sec:center_diaognal} we introduce and apply centers to the SBDC and orthogonal SBDC of a set of symmetric matrices, and give a simple algorithm.
Section~\ref{sec:example} presents more examples to demonstrate the effectiveness of this technique.
Section~\ref{sec:hermitian} extends the center theory to the SBDC and the unitary SBDC of a set of Hermitian matrices.

\section{The center and simultaneous block diagonalizations}\label{sec:center_diaognal}


\subsection{The center of a set of symmetric matrices}\label{sec:center_matrices}

We begin by defining the center of a set of symmetric matrices. Let $A \in \k^{n \times n}$ be a symmetric matrix and $q(x_1, x_2, \dots, x_n) \in \k[x_1, x_2, \dots, x_n]$ be its associated quadratic form. Since $q$ can be represented as $q(x)=x^TAx$, where $x = (x_1, x_2, \dots, x_n)^T$, one readily finds that $H_q=2A$. Thus, we can reformulate the center of a set of quadratic forms in Equation~\eqref{eqn:center_poly_multi} to the center of a set of symmetric matrices.

\begin{definition} \label{def:center_matrix_multi}
\emph{Let $A_1, A_2, \dots, A_m \in \k^{n \times n}$ be a set of symmetric matrices. The center of these matrices is defined as
\begin{equation}\label{eqn:center_multiple}
    Z(A_1, A_2, \dots, A_m) := \{X \in \k^{n \times n} \mid (A_iX)^T=A_iX, \ 1 \le i \le m \}.
\end{equation}
}
\end{definition}

\begin{remark}\label{rem:matrix_poly}
\emph{Under the assumption that the characteristic of $ \k$ is not equal to $2$, a set of symmetric matrices and their associated quadratic forms share the same center. For clarity, we focus on symmetric matrices instead of quadratic forms in the rest of this article. It is obvious that $Z(A_1, A_2, \dots, A_m)$ contains all scalar matrices.}
\end{remark}

\begin{example}\label{ex:example1}\emph{Consider the pair of symmetric matrices $A_1=\begin{pmatrix}-2&2&-2\\2&2&0\\-2&0&-1\end{pmatrix}$ and $A_2=\begin{pmatrix}5&7&-1\\7&5&1\\-1&1&-1\end{pmatrix}$. According to Equation~\eqref{eqn:center_multiple}, we compute the center of $A_1$ and $A_2$ as
\begin{equation*}
    Z(A_1, A_2)=\left\{\begin{pmatrix}a-5b-c-d+5e&-a-b-5d+e&a-2b-d-e\\6a+6b+c-6d-6e&6a&3b\\2c&12d&6e\end{pmatrix} \vline\, a, b, c, d, e \in\k \right\}.
\end{equation*}}
\end{example}

The center of a set of symmetric matrices may not have an associative algebraic structure as the center of a single higher degree form. However, it has a special Jordan algebraic structure as observed in \cite{fang2025simultaneous}. As a matter of fact, if $X, Y \in Z(A_1, A_2, \dots, A_m)$, then $X \odot Y := \frac{1}{2}(XY+YX) \in Z(A_1, A_2, \dots, A_m)$, hence $(Z(A_1, A_2, \dots, A_m), \odot)$ becomes a Jordan subalgebra of the full matrix Jordan algebra under the symmetric matrix product $\odot$. It is not hard to see that, a complete set of orthogonal idempotents of $(Z(A_1, A_2, \dots, A_m), \odot)$ is a set of mutually orthogonal matrices in $Z(A_1, A_2, \dots, A_m)$ which sum up to the identity matrix, see \cite{fang2025simultaneous} for more details.

\subsection{Simultaneous block diagonalization of symmetric matrices}\label{sec:simul_diagonal}

We call $\epsilon_1, \epsilon_2, \dots, \epsilon_t \in Z(A_1, A_2, \ldots, A_m)$ a complete set of orthogonal idempotent elements if \[ \epsilon_i^2=\epsilon_i \ \forall i, \quad \epsilon_i \epsilon_j=0 \ \forall i \ne j, \quad \sum_i \epsilon_i=I_n. \] It turns out that complete sets of orthogonal idempotent elements of centers are crux for the purpose of simultaneously block diagonalizing a set of symmetric matrices via congruence.

\begin{theorem}\label{thm:center}
Suppose $A_1, A_2, \ldots, A_m \in \k^{n \times n}$ are a set of symmetric matrices. Then
\begin{itemize}
    \item[{(1)}] There is a one-to-one correspondence between simultaneous block diagonalizations of $A_1, A_2, \ldots, A_m$ via congruence and complete sets of orthogonal idempotent elements of $Z(A_1, A_2, \ldots, A_m)$.
    \item[{(2)}] $A_1, A_2, \ldots, A_m$ are not simultaneously block diagonalizable via congruence if and only if $Z(A_1, A_2, \ldots, A_m)$ has no nontrivial idempotent elements.
    \item[{(3)}] $A_1, A_2, \ldots, A_m$ are simultaneously diagonalizable via congruence if and only if $Z(A_1, A_2, \ldots, A_m)$ has a complete set of $n$ orthogonal idempotent elements.
    \item[{(4)}] The finest simultaneous block diagonalization for $A_1, A_2, \ldots, A_m$ via congruence is unique up to equivalence.
\end{itemize}
\end{theorem}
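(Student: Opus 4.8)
The plan is to read parts (1)--(3) as explicit linear-algebra translations of the idempotent condition, using the reformulated center relation (since each $A_i$ is symmetric, $X\in Z(A_1,\dots,A_m)$ is equivalent to $X^{T}A_i=A_iX$ for all $i$), and to reserve the real work for the uniqueness statement (4). For part (1) I would set up the correspondence as follows. Given a complete set of orthogonal idempotents $\epsilon_1,\dots,\epsilon_t$, put $V_j:=\Im(\epsilon_j)$. The idempotent relations give $\k^{n}=\bigoplus_j V_j$, and the center condition makes this decomposition orthogonal with respect to every form: for $u=\epsilon_j u\in V_j$ and $v=\epsilon_k v\in V_k$ with $j\ne k$ one computes $v^{T}A_i u=v^{T}\epsilon_k^{T}A_i\epsilon_j u=v^{T}A_i\epsilon_k\epsilon_j u=0$, using $\epsilon_k^{T}A_i=A_i\epsilon_k$ and $\epsilon_k\epsilon_j=0$. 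Assembling bases of the $V_j$ into the columns of $P$ then yields $P^{T}A_iP$ block diagonal for all $i$. Conversely, from a block diagonalization $P^{T}A_iP=\operatorname{diag}(B_{i1},\dots,B_{it})$ I let $V_j$ be the span of the $j$-th group of columns of $P$ and take $\epsilon_j=PE_jP^{-1}$, where $E_j$ is the standard coordinate projection onto block $j$; writing $A_i=(P^{T})^{-1}D_iP^{-1}$ with $D_i$ block diagonal, the identity $D_iE_j=E_jD_i$ shows $\epsilon_j^{T}A_i=A_i\epsilon_j$, so $\epsilon_j\in Z$. Since $P$ is determined only up to a basis change inside each $V_j$, which leaves the $\epsilon_j$ unchanged, these constructions are mutually inverse once a block diagonalization is identified with the ordered orthogonal decomposition $\k^{n}=\bigoplus_j V_j$ it induces, giving the claimed bijection.

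Parts (2) and (3) I would then derive as immediate corollaries of (1). For (2), a nontrivial idempotent $\epsilon$ produces the complete orthogonal pair $\{\epsilon,\,I_n-\epsilon\}$ and hence a genuine two-block diagonalization, while conversely any nontrivial block diagonalization supplies, via (1), a complete set with $t\ge 2$ whose members are nontrivial idempotents; thus the failure of block diagonalization is equivalent to the absence of nontrivial idempotents. For (3), a full diagonalization is the case $t=n$, forcing every block to be $1\times 1$, which under (1) is exactly a complete set of $n$ orthogonal idempotents, each necessarily of rank one since their ranks (the dimensions of the $V_j$) are positive and sum to $n$.

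The crux is part (4). Under (1), a finest block diagonalization corresponds to writing $I_n$ as a sum of \emph{primitive} orthogonal idempotents of the Jordan algebra $(Z,\odot)$, and the content of (4) is that the resulting multiset of diagonal blocks is independent of the choice, up to permutation and congruence. My plan is to argue in two stages. First I would isolate the idempotents lying in the Jordan center of $Z$: these decompose $Z$ into indecomposable ideals and, being unique, yield a canonical coarse orthogonal decomposition $\k^{n}=\bigoplus_l W_l$ into ``isotypic'' pieces that every block diagonalization must refine. Second, within a single component I would show that any two complete sets of primitive orthogonal idempotents have the same cardinality and produce mutually congruent blocks. The natural framework is a Krull--Schmidt/Azumaya argument: orthogonal splittings of the system $(U,\{A_i|_{U}\})$ are \emph{precisely} the idempotents of its center, an indecomposable system is by part (2) one whose center has no nontrivial idempotents, and uniqueness of the decomposition into indecomposables follows once the endomorphism data of an indecomposable system is shown to be suitably local. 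I expect this last point --- controlling the endomorphism structure of an indecomposable system of symmetric forms, complicated by the fact that $Z$ is only a Jordan algebra rather than an associative one --- to be the main obstacle, and the place where the capacity/coordinatization theory of Jordan algebras (or the classification theory for systems of bilinear forms) has to be brought in.
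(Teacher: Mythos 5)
Your treatment of parts (1)--(3) is correct and is in substance the paper's own argument. The paper conjugates a complete set of orthogonal idempotents to the standard block projections $e_j$ (using that idempotents are diagonalizable and pairwise commute, hence are simultaneously diagonalizable) and then shows $P^TA_iP$ commutes with every $e_j$, forcing block diagonality; you instead work directly with the image decomposition $\k^n=\bigoplus_j \Im(\epsilon_j)$ and verify $A_i$-orthogonality across distinct summands via $\epsilon_k^TA_i=A_i\epsilon_k$ and $\epsilon_k\epsilon_j=0$. These are the same computation in two coordinate systems, and your explicit observation that the idempotents depend only on the decomposition and not on the within-block bases is, if anything, a more careful statement of what the ``one-to-one correspondence'' means. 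Your derivations of (2) and (3) from (1) match the paper, which simply notes they follow evidently.

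The issue is part (4). What you describe --- central idempotents giving a coarse isotypic decomposition, then a Krull--Schmidt/Azumaya argument within each piece, with the admitted obstacle of controlling the endomorphism structure of an indecomposable system when $Z$ is only a Jordan algebra --- is left as a plan whose decisive step you explicitly cannot complete, so as a standalone proof this is a gap. But the missing step is not something that would fail: it is precisely the classical structure theorem for unital Jordan algebras that a complete set of primitive orthogonal idempotents is unique up to equivalence, and the paper disposes of (4) in one sentence by citing exactly this result \cite{mccrimmon2003taste}. So your instinct about where the difficulty is concentrated is right, but the intended resolution is to invoke the known Jordan-theoretic uniqueness theorem rather than to rebuild Krull--Schmidt theory from scratch; the concern that $Z$ is ``only a Jordan algebra rather than an associative one'' is exactly what that cited theory addresses.
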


\begin{proof}
A proof of item (1) is provided, which evidently implies items (2) and (3). Item (4) is a direct consequence of the uniqueness of the complete set of primitive orthogonal idempotent elements of unital Jordan algebras up to equivalence, see \cite{mccrimmon2003taste} for more details. 

We begin by examining how centers of a set of symmetric matrices vary under the congruence transformation. Let $B_1, B_2, \ldots, B_m \in \k^{n \times n}$ be a set of symmetric matrices. Suppose there exists an invertible matrix $P \in \GL_n(\k)$ such that $B_i=P^T A_i P$ for $1 \le i \le m$. According to Equation~\eqref{eqn:center_multiple}, we have $(B_iY)^T=B_iY$ for any $Y \in Z(B_1,B_2,\ldots, B_m)$. It follows that $(B_iY)^T = Y^{T}P^{T}A_iP=P^{T}A_iPY$, which shows that $PYP^{-1} \in Z(A_1,A_2,\ldots, A_m)$. Consequently, we have $Z(B_1,B_2,\ldots, B_m)=P^{-1}Z(A_1,A_2,\ldots, A_m)P.$ 

Suppose symmetric matrices $A_1, A_2, \ldots, A_m$ are simultaneously block diagonalizable via congruence such that
\begin{align*}
    B_i = P^{T}A_iP =\begin{pmatrix}
					B_{i1}&&\\
					&\ddots&\\
					&&B_{it}
    \end{pmatrix}
\end{align*}
for all $1 \le i \le m$. For all $1 \le j \le m$, let
\[
    e_j=\begin{pmatrix}
    0 &&&& \\ & \ddots &&& \\ &&I_j&& \\ &&& \ddots & \\ &&&& 0
\end{pmatrix} 
\]
be block diagonal matrices which are conformal with the $B_i$'s. It is clear that the $e_j$'s are a complete set of orthogonal idempotent elements of $Z(B_1, B_2, \dots, B_m)$. Since $Z(A_1, A_2, \dots, A_m)=PZ(B_1, B_2, \dots, B_m)P^{-1}$, it is obvious that $Pe_1P^{-1}, Pe_2P^{-1}, \ldots,  Pe_tP^{-1}$ are a complete set of orthogonal idempotent elements of $Z(A_1, A_2, \ldots, A_m)$.

Conversely, suppose $\epsilon_1, \epsilon_2, \ldots, \epsilon_t$ in $Z(A_1, A_2, \ldots, A_m)$ are a complete set of orthogonal idempotent elements. Since the $\epsilon_i$'s are diagonalizable matrices and they pairwise commute, $\epsilon_1, \epsilon_2, \ldots ,\epsilon_t$ are simultaneously diagonalizable. Therefore, there exists an invertible matrix $P \in \GL_n(\k)$ such that $P^{-1}\epsilon_j P=e_j$ for all $1 \le j \le t$ as above. Let $B_i = P^{T}A_iP$ for all $1 \le i \le m$. We now have $B_ie_j=(B_ie_j)^T=e_j^TB_{i}^T=e_jB_i$ for $1\le i \le m$ and $1\le j \le t$. We further deduce that the $B_i$'s are block diagonal matrices and can be written as
			\begin{align*}
				B_i &=\begin{pmatrix}
					B_{i1}&&\\
					&\ddots&\\
					&&B_{it}
				\end{pmatrix}
			\end{align*}
for all $1 \le i \le m$. Therefore, a simultaneous block diagonalization of $A_1, A_2, \ldots, A_m$ via congruence corresponds to the complete set of orthogonal idempotent elements $\epsilon_1, \epsilon_2, \ldots, \epsilon_t$.
\end{proof}

For real symmetric matrices, it is more appropriate to consider simultaneous congruence via orthogonal matrices in some specific problems. Our approach can be easily adapted to this situation. 

\begin{corollary}[Orthogonal congruence]\label{cor:orthogonal}
Suppose $A_1, A_2, \ldots, A_m \in \R^{n \times n}$ are a set of real symmetric matrices. Then there is a one-to-one correspondence between simultaneous block diagonalizations of $A_1, A_2, \ldots, A_m$ via orthogonal congruence and complete sets of symmetric orthogonal idempotent elements of $Z(A_1, A_2, \ldots, A_m)$.
\end{corollary}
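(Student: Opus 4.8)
The plan is to mirror the proof of Theorem~\ref{thm:center}(1), tracking at each step the extra structure imposed by orthogonality. The single observation that drives everything is that an orthogonal matrix $P \in \GL_n(\R)$ satisfies $P^T = P^{-1}$, so orthogonal congruence $A_i \mapsto P^T A_i P$ coincides with orthogonal similarity $A_i \mapsto P^{-1} A_i P$. Consequently the center transformation law established in the proof of Theorem~\ref{thm:center}, namely $Z(P^T A_1 P, \ldots, P^T A_m P) = P^{-1} Z(A_1, \ldots, A_m) P$, still applies, and conjugation by such a $P$ now additionally preserves the symmetry of a matrix, since $(PXP^{-1})^T = (PXP^T)^T = PX^TP^T = PX^TP^{-1}$. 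Recall also that a symmetric idempotent is precisely an orthogonal projection, which is the geometric reason symmetry is the right companion to orthogonal congruence.

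For the forward direction, I would suppose $P \in O(n)$ realizes $P^T A_i P = B_i$ in block-diagonal form. The standard block idempotents $e_1, \ldots, e_t$ are symmetric, and each $P e_j P^{-1} = P e_j P^T$ is again symmetric and, by Theorem~\ref{thm:center}(1), a member of a complete set of orthogonal idempotents of $Z(A_1, \ldots, A_m)$. Thus an orthogonal block diagonalization produces a complete set of \emph{symmetric} orthogonal idempotents.

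For the converse, let $\epsilon_1, \ldots, \epsilon_t$ be a complete set of symmetric orthogonal idempotents in the center. Each $\epsilon_j$ is a real symmetric matrix, and the relations $\epsilon_i \epsilon_j = 0$ for $i \ne j$ together with $\epsilon_j^2 = \epsilon_j$ show the family pairwise commutes. The key step—and the only place the argument genuinely departs from Theorem~\ref{thm:center}—is to invoke the spectral theorem for a commuting family of real symmetric matrices: such a family is simultaneously diagonalizable by a \emph{single orthogonal} matrix $P \in O(n)$. Because each $\epsilon_j$ has spectrum in $\{0,1\}$ and the $\epsilon_j$ are mutually orthogonal and sum to $I_n$, this simultaneous diagonal form is exactly $P^{-1} \epsilon_j P = e_j$. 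With this orthogonal $P$ in hand, the remainder is identical to Theorem~\ref{thm:center}: setting $B_i = P^T A_i P$ and using $B_i e_j = e_j B_i$ forces each $B_i$ into block-diagonal form, and now the congruence is orthogonal.

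Finally, I would check that the two assignments are mutually inverse by restricting the bijection of Theorem~\ref{thm:center}(1) to the orthogonal setting. Here symmetry of the idempotents corresponds precisely to orthogonality of the diagonalizing matrix: symmetry is sufficient by the spectral argument above, and it is also necessary, since $\epsilon_j = P e_j P^T$ with $P$ orthogonal immediately forces $\epsilon_j^T = \epsilon_j$. The main obstacle is exactly the orthogonal simultaneous diagonalization of the symmetric idempotents; once that spectral fact is in place, everything else transfers verbatim from the congruence case.
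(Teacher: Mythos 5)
Your proposal is correct and follows essentially the same route as the paper's proof: the forward direction transports the symmetric block idempotents $e_j$ to $Pe_jP^T$ using that orthogonal conjugation preserves symmetry, and the converse invokes the spectral theorem for a commuting family of real symmetric matrices to obtain a single \emph{orthogonal} simultaneous diagonalizer, after which the block-diagonality argument of Theorem~\ref{thm:center} applies verbatim. The only differences are cosmetic—you make explicit the identification of orthogonal congruence with orthogonal similarity and the necessity of symmetry in the correspondence, both of which the paper leaves implicit.
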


\begin{proof}
The correspondence for general cases was established in Theorem~\ref{thm:center}. We now refine it to the real orthogonal case. Suppose $Q\in \operatorname{O}_{n}(\R)$ is an invertible orthogonal matrix and $B_i = Q^TA_iQ$ are block diagonal as defined above. Then the corresponding idempotent elements $e_1,e_2,\dots,e_t$ of $Z(B_1, B_2, \ldots, B_m)$ are real, symmetric, and mutually orthogonal. Since $Q^{-1}=Q^{T}$, we have $Z(A_1, A_2, \ldots, A_m) = QZ(B_1, B_2, \ldots, B_m)Q^T$. Thus, $Q e_1 Q^T, Q e_2 Q^T,\ldots,$ $Q e_t Q^T$ form a complete set of symmetric orthogonal idempotent elements of $Z(A_1, A_2, \ldots, A_m)$.

Conversely, assume $Z(A_1,A_2\dots,A_m)$ contains a complete set of symmetric orthogonal idempotent elements $\epsilon_1,\epsilon_2,\dots,\epsilon_t$. Since $\epsilon_1,\epsilon_2,\dots,\epsilon_t$ are real, symmetric, and pairwise commute, they are simultaneously diagonalizable by an orthogonal matrix $Q\in \operatorname{O}_{n}(\R)$ such that $Q^T\epsilon_j Q=e_j$. The same argument as in Theorem~\ref{thm:center} shows that $Q^TA_iQ$ commutes with all $e_j$ and is therefore block diagonal. Hence $Q$ simultaneously block diagonalizes $A_1,A_2,\dots,A_m$ via orthogonal congruence. This completes the correspondence.    
\end{proof}


\subsection{Algorithm for simultaneous block diagonalization}\label{sec:diagonal_algorihm}

We describe the procedures for the SBDC of a set of symmetric matrices in Algorithm~\ref{alg:diagonal}. It is a recursive process that begins with computing the center of these symmetric matrices according to Definition~\ref{def:center_matrix_multi}. Using the base matrices of the center, a nontrivial idempotent~$\epsilon_1$ is found, and the second orthogonal idempotent is then defined as~$\epsilon_2=I-\epsilon_1$. If no orthogonal idempotent is found, the symmetric matrices are not simultaneously block diagonalizable via congruence. Otherwise, an invertible matrix $P$ is computed according to Theorem~\ref{thm:center} and these symmetric matrices are simultaneously block diagonalized via congruence as a set of block diagonal matrices, each with two diagonal block submatrices.

\begin{algorithm}
    \caption{Simultaneous block diagonalization of symmetric matrices}\label{alg:diagonal}
    \begin{algorithmic} [1]
    \Statex \textbf{Input}: a set of symmetric matrices~$A_1, A_2, \ldots, A_m$
    \Statex \textbf{Output}: two sets of symmetric matrices $B_{11}, B_{21},\ldots, B_{m1}$ and $B_{12}, B_{22}, \ldots, B_{m2}$
    \State Calculate the center~$Z(A_1, A_2,\ldots, A_m)$ by solving system $(A_iX)^T=A_iX$ for $1 \le i \le m$. Select base matrices $X_{j}$ for $1 \le j \le \dim{Z(A_1, A_2,\ldots, A_m)}$ such that $X=\sum_{j=1}^{\dim{Z(A_1, A_2,\ldots, A_m)}}X_j$. 
    \State Impose the constraint $X^2=X$ on the center $Z(A_1, A_2, \ldots, A_m)$ to compute a nontrivial idempotent~$\epsilon_1$. For orthogonal congruence, an additional constraint $X^T=X$ is imposed on $Z(A_1, A_2, \ldots, A_m)$. If $\epsilon_1$ does not exist, $A_1, A_2, \ldots, A_m$ are not simultaneously block diagonalizable via congruence. Otherwise, calculate the orthogonal idempotent $\epsilon_2=I-\epsilon_1$ and continue.
    \State Compute invertible matrix $P$ whose columns form a basis of simultaneous eigenvectors for $\epsilon_1$ and $\epsilon_2$, which guarantees $P^{-1}\epsilon_1P=\begin{pmatrix} I_{1} & \\  & 0\end{pmatrix}$ and $P^{-1}\epsilon_2P=\begin{pmatrix} 0 & \\  & I_{2}\end{pmatrix}$. For orthogonal congruence, take bases of column spaces of $\epsilon_1$ and $\epsilon_2$, then apply Gram–Schmidt orthogonalization to obtain an orthogonal matrix $P$.
    \State Simultaneously block diagonalize~$A_1, A_2, \ldots, A_m$ via congruence such that $P^TA_{i}P$ takes the form of a block diagonal matrix $\begin{pmatrix} B_{i1} & \\  & B_{i2}\end{pmatrix}$ for $1 \le i \le m$. Go to step 1 and continue to simultaneously block diagonalize $B_{11}, B_{21},\ldots, B_{m1}$ and $B_{12}, B_{22}, \ldots, B_{m2}$ via congruence separately.
    \end{algorithmic}
\end{algorithm}

Since these submatrices may admit further SBDC, Algorithm~\ref{alg:diagonal} is applied recursively. This process occurs recursively until all output matrices are not simultaneously block diagonalizable via congruence. The resulting output matrices can then be reassembled as block diagonal matrices corresponding to the original symmetric matrices~$A_1, A_2, \ldots, A_m$. The maximum number of blocks for this decomposition is uniquely determined by the structure of the primitive orthogonal idempotent elements in the associated Jordan algebra~\cite{mccrimmon2003taste}.

This algorithm contains several components, including computations of centers and orthogonal idempotents, which use standard tasks in linear algebra~\cite{huang2021centres}. Calculating the centers of symmetric matrices requires solving a system of linear equations, which has polynomial complexity~\cite{horn2012matrix}. The orthogonal idempotents are obtained by imposing constraints on the rank and trace of the matrix $X$ in Equation~\eqref{eqn:center_multiple}, which yields a system of nonlinear equations. For the SDC of a set of symmetric matrices, this system only contains sparse quadratic equations that are solved efficiently in numerical experiments~\cite{fang2025numerical}. For the general SBDC, determining idempotents may involve solving polynomial systems of higher complexity and remains a challenging problem. Nevertheless, for many practical instances with lower-dimensional centers, it remains computationally feasible.

\section{More examples}\label{sec:example}

This section presents additional examples to illustrate the procedure of this technique. While the theory is developed for a general field $\k$, the following computations are performed over either $\R$ or $\C$ for the convenience of the exposition. The SDC of two and three symmetric matrices are shown in Examples~\ref{exp:two_diag} and~\ref{exp:three_diag}, respectively. The SBDC and orthogonal SBDC of three symmetric matrices are given in Examples~\ref{exp:three_block} and~\ref{exp:orth_block_three}, respectively.

\begin{example}\label{exp:two_diag}
\emph{(Example~\ref{ex:example1} continued) Consider the following two real symmetric matrices
\begin{equation*}
    A_1=\begin{pmatrix}-2&2&-2\\2&2&0\\-2&0&-1\end{pmatrix}, \quad A_2=\begin{pmatrix}5&7&-1\\7&5&1\\-1&1&-1\end{pmatrix}.
\end{equation*}
Using the center calculated in Example~\ref{ex:example1}, we obtain the base matrices of $Z(A_1,A_2)$ as
\begin{equation*}
    X_1=\begin{pmatrix}1&-1&1\\6&6&0\\0&0&0\end{pmatrix}, \, X_2=\begin{pmatrix}-5&-1&-2\\6&0&3\\0&0&0\end{pmatrix}, \, X_3=\begin{pmatrix}-1&0&0\\1&0&0\\2&0&0\end{pmatrix},\, X_4=\begin{pmatrix}-1&-5&-1\\-6&0&0\\0&12&0\end{pmatrix}, \, X_5=\begin{pmatrix}5&1&-2\\-6&0&0\\0&0&6\end{pmatrix}.
\end{equation*}
For orthogonal congruence, we impose an additional symmetry condition $X^T=X$ on the idempotent $X\in Z(A_1,A_2)$. Since the only solutions are $0$ and $I_3$, $A_1$ and $A_2$ are not simultaneously block diagonalizable via orthogonal congruence. Thus, we proceed to calculate orthogonal idempotents for general congruence. Since $-X_3$ is a  primitive idempotent, we set $\epsilon_1=-X_3$ and $\epsilon_2=I-\epsilon_1$. A pair of orthogonal idempotents is produced as
\begin{equation*}
    \epsilon_1=\begin{pmatrix}1&0&0\\-1&0&0\\-2&0&0\end{pmatrix}, \quad \epsilon_2=\begin{pmatrix}0&0&0\\1&1&0\\2&0&1\end{pmatrix}
\end{equation*}
and we calculate an invertible matrix $P_1=\begin{pmatrix}-1&0&0\\1&1&0\\2&0&1\end{pmatrix}$ according to Theorem~\ref{thm:center} such that
\begin{equation*}
    P_1^{-1}\epsilon_1P_1=\begin{pmatrix}1&&\\&0&\\&&0\end{pmatrix}, \quad P_1^{-1}\epsilon_2P_1=\begin{pmatrix}0&&\\&1&\\&&1\end{pmatrix}.
\end{equation*}}

\emph{The matrices $A_1$ and $A_2$ can then be simultaneously block diagonalized via congruence as
\begin{equation*}
    P_1^TA_1P_1=\begin{pmatrix}
				0&&\\
				&2& 0\\
				&0&-1
			\end{pmatrix}, \quad P_1^TA_2P_1=\begin{pmatrix}
			0&&\\
			&5&1\\
			&1&-1
			\end{pmatrix}, 
\end{equation*}
respectively. Since neither matrix is fully diagonalized, we continue to simultaneously diagonalize their bottom-right submatrices, which are denoted as $B_1=\begin{pmatrix} 2&0\\0&-1\end{pmatrix}$ and $B_2=\begin{pmatrix}5&1\\1&-1\end{pmatrix}$.
Their center is calculated as
\begin{equation*}
    Z(B_1,B_2)=\left\{\begin{pmatrix}-3a+b&-a\\2a&b\end{pmatrix}\vline\, a, b\in\R\right\}. 
\end{equation*}
We obtain a new pair of idempotent elements using $Z(B_1,B_2)$ as
\begin{equation*}
    \epsilon_3=\begin{pmatrix}-1&-1\\2&2\end{pmatrix}, \quad \epsilon_4=\begin{pmatrix}2&1\\-2&-1\end{pmatrix} 
\end{equation*}
and derive $P_2=\begin{pmatrix}-1&-1\\2&1\end{pmatrix}$ such that $P_2^{-1}\epsilon_3 P_2=\begin{pmatrix}1&0\\0&0\end{pmatrix}$ and $P_2^{-1}\epsilon_4 P_2=\begin{pmatrix}0&0\\0&1\end{pmatrix}$. 
Thus, $B_1$ and $B_2$ are simultaneously diagonalized via congruence as 
\begin{equation*}
    P_2^TB_1P_2=\begin{pmatrix}-2&0\\0&1\end{pmatrix}, \quad P_2^TB_2P_2=\begin{pmatrix}-3&0\\0&2\end{pmatrix},
\end{equation*}
respectively.}

\emph{Finally, we can see that $A_1$ and $A_2$ are simultaneously diagonalized via congruence as 
\begin{equation*}
    P^TA_1P=\begin{pmatrix}0&&\\&-2&\\&&1\end{pmatrix}, \quad P^TA_2P=\begin{pmatrix}0&&\\&-3&\\&&2\end{pmatrix},
\end{equation*}
respectively, where
\begin{equation*}
    P=P_1\begin{pmatrix}
				1&0\\
				0&P_2
			\end{pmatrix}=
        \begin{pmatrix}
					-1&0&0\\
					1&-1&-1\\
					2&2&1
				\end{pmatrix}.
\end{equation*}
It is interesting to note that $A_1$ and $A_2$ are not simultaneously decomposable via orthogonal congruence, but they are simultaneously diagonalizable via general congruence. 
}
\end{example}

\begin{example}\label{exp:three_diag}
\emph{Consider the following symmetric matrices
\[
    A_1 = \begin{pmatrix} 1 & 0 & 0 \\ 0 & -1 & 1 \\ 0 & 1 & 2 \end{pmatrix}, \quad
    A_2 = \begin{pmatrix} 0 & 1 & -1 \\ 1 & -1 & 1 \\ -1 & 1 & 1 \end{pmatrix}, \quad
    A_3 = \begin{pmatrix} 2 & 1 & -1 \\ 1 & -3 & 3 \\ -1 & 3 & -2 \end{pmatrix}.
\]
Firstly, we consider their SBDC over $\R$. We compute their center by Equation~\eqref{eqn:center_multiple} as
			\begin{equation*}
				Z(A_1,A_2,A_3)=\left\{\begin{pmatrix}
					a-b+c & -c & c \\ 
					c & a-b & b \\
					0 & 0 & a
				\end{pmatrix}\vline\, a,b\in \R \right\}
			\end{equation*}
and its base matrices are
			\begin{equation*}
				X_1=\begin{pmatrix}
					1 & 0 & 0\\
					0 & 1 & 0\\
					0 & 0 & 1
				\end{pmatrix},\quad 
				X_2=\begin{pmatrix}
					-1 & 0 & 0\\
					0 & -1 & 1\\
					0 & 0 & 0
				\end{pmatrix},\quad 
				X_3=\begin{pmatrix}
					1 & -1 & 1\\
					1 & 0 & 0\\
					0 & 0 & 0
				\end{pmatrix}.
			\end{equation*}
A pair of orthogonal idempotents is calculated as
			\begin{equation*}
				\epsilon_1=\begin{pmatrix}
					1 & 0 & 0\\
					0 & 1 & -1\\
					0 & 0 & 0
				\end{pmatrix},\quad 
				\epsilon_2=\begin{pmatrix}
					0 & 0 & 0\\
					0 & 0 & 1\\
					0 & 0 & 1
				\end{pmatrix} 
			\end{equation*}
and we have
\begin{equation*}
P_1^{-1}\epsilon_1P_1=\begin{pmatrix}1&&\\&1&\\&&0\end{pmatrix}, \quad P_1^{-1}\epsilon_2P_1=\begin{pmatrix}0&&\\&0&\\&&1\end{pmatrix},
\end{equation*}
where $P_1=\begin{pmatrix}
					1 & 1 & 0\\
					0 & 1 & 1 \\
					0 & 0 & 1
				\end{pmatrix}$.        
Thus, we can simultaneously block diagonalize the symmetric matrices via congruence as
			\begin{equation*}
				P_1^TA_1P_1=\begin{pmatrix}
					1 & 1 & \\
					1 & 0 &  \\
					 &  & 3
				\end{pmatrix},\quad 
				P_1^TA_2P_1=\begin{pmatrix}
					0 & 1 & \\
					1 & 1 & \\
					 &  & 2
				\end{pmatrix},\quad 
				P_1^TA_3P_1=\begin{pmatrix}
					2 & 3 &  \\
					3 & 1 &  \\
					 &  & 1
				\end{pmatrix}.
 			\end{equation*}}

\emph{We repeat this process using their top left submatrices, which are denoted as
				\begin{equation*}
				B_1=\begin{pmatrix}
					1 & 1\\
					1 & 0
				\end{pmatrix},\quad 
				B_2=\begin{pmatrix}
					0 & 1 \\
					1 & 1 
				\end{pmatrix},\quad 
				B_3=\begin{pmatrix}
					2 & 3 \\
					3 & 1 
				\end{pmatrix}.
			\end{equation*}	 
Their center is computed as
\[
    Z(B_1,B_2,B_3)=\left\{\begin{pmatrix} a-b & -b \\ b & a  \end{pmatrix}\vline\, a,b\in \R \right\}.
\]
We obverse that $Z(B_1,B_2,B_3)\cong \C$ as algebras over $\R$, in other words, it only contains trivial idempotent elements $0$ and $I_3$. This is also straightforward by computation. Therefore, $B_1,B_2,B_3$ are not simultaneously diagonalizable via congruence over $\R$ and $A_1,A_2,A_3$ are simultaneously block diagonalizable via congruence over $\R$ as above. }

\emph{Now we consider the SBDC problem over $\C$, then $Z(A_1, A_2, A_3)$ shares the same bases and can be simultaneously block diagonalized as above. In contrast, $Z(B_1,B_2,B_3)\cong \C\times\C$ over the complex field $\C$ and must contain nontrivial idempotents. We obtain a new pair of orthogonal idempotents $\epsilon_3 = \begin{pmatrix} \frac{3 - i\sqrt{3}}{6} & -\frac{i\sqrt{3}}{3} \\ \frac{i\sqrt{3}}{3} & \frac{3 + i\sqrt{3}}{6} \end{pmatrix}$ and $\epsilon_4 = \begin{pmatrix} \frac{3 + i\sqrt{3}}{6} & \frac{i\sqrt{3}}{3} \\ -\frac{i\sqrt{3}}{3} & \frac{3 - i\sqrt{3}}{6} \end{pmatrix}$, then derive a corresponding invertible matrix $P_2=\begin{pmatrix}\frac{-1 - i\sqrt{3}}{2} & \frac{-1 + i\sqrt{3}}{2} \\ 1 & 1 \end{pmatrix}$ such that $P_2^{-1}\epsilon_3P_2=\begin{pmatrix}1&0\\0&0\end{pmatrix}$ and $P_2^{-1}\epsilon_4P_2=\begin{pmatrix}0&0\\0&1\end{pmatrix}$. Thus, $B_1,B_2,B_3$ are simultaneously diagonalized via congruence as
\[
    P_2^T B_1 P_2=\begin{pmatrix} \frac{-3 - i\sqrt{3}}{2} &  \\  & \frac{-3 + i\sqrt{3}}{2} \end{pmatrix},\:\quad
    P_2^T B_2 P_2=\begin{pmatrix} -i\sqrt{3} &  \\  & i\sqrt{3} \end{pmatrix},\:\quad
    P_2^T B_3 P_2=\begin{pmatrix}-3 - 2i\sqrt{3} &  \\  &-3 + 2i\sqrt{3} \end{pmatrix},
\]
respectively. Finally, $A_1,A_2,A_3$ are simultaneously diagonalized via congruence over $\C$ as
\[
P^T A_1 P=\begin{pmatrix} \frac{-3 - i\sqrt{3}}{2} &  \\   & \frac{-3 + i\sqrt{3}}{2} & \\  &  & 3 \end{pmatrix},\quad
P^T A_2 P=\begin{pmatrix} -i\sqrt{3} &  &  \\  & i\sqrt{3} &  \\  &  & 2 \end{pmatrix},\quad
P^T A_3 P=\begin{pmatrix} -3 - 2i\sqrt{3}  &  &  \\  & -3 + 2i\sqrt{3}  &  \\  &  & 1 \end{pmatrix},
\]
respectively, where
\begin{equation*}
    P=P_1\begin{pmatrix}
				P_2&0\\
				0&1
			\end{pmatrix}= \begin{pmatrix}
    \frac{1 - i\sqrt{3}}{2} & \frac{1 + i\sqrt{3}}{2} & 0 \\
    1 & 1 & 1 \\
    0 & 0 & 1
\end{pmatrix}.
\end{equation*}}

\emph{This example indicates that the SBDC problem depends on the ground field.}
\end{example}

\begin{example}\label{exp:three_block}
\emph{Suppose we have three complex symmetric matrices
			\begin{equation*}
				A_1=\begin{pmatrix}
					1&2&3\\
					2&8&16\\
					3&16&33
				\end{pmatrix},\quad 
				A_2=\begin{pmatrix}
					1&2&3\\
					2&6&12\\
					3&12&25
				\end{pmatrix},\quad 
				A_3=\begin{pmatrix}
					1&2&3\\
					2&7&16\\
					3&16&37
				\end{pmatrix}.
			\end{equation*}
We compute their center as
			\begin{equation*}
				Z(A_1,A_2,A_3)=\left\{\begin{pmatrix}
					a-8b+c&2a-b&3a\\
					0&-12b+c&-12b\\
					0&3b&c
				\end{pmatrix}\vline\, a,b,c\in \C \right\}
			\end{equation*}
with base matrices 
\begin{equation*}
    X_1=\begin{pmatrix}1&2&3\\0&0&0\\0&0&0\end{pmatrix}, \enspace 
    X_2=\begin{pmatrix}-8&-1&0\\0&-12&-12\\0&3&0\end{pmatrix}, \enspace X_3=\begin{pmatrix}1&0&0\\0&1&0\\0&0&1\end{pmatrix}.
\end{equation*}
Since $X_1$ is a primitive idempotent, we set $\epsilon_1=X_1$ and obtain a pair of orthogonal idempotents
\begin{equation*}
\epsilon_1=\begin{pmatrix}1&2&3\\0&0&0\\0&0&0\end{pmatrix}, \quad \epsilon_2=\begin{pmatrix}0&-2&-3\\0&1&0\\0&0&1\end{pmatrix}.
\end{equation*}}

\emph{Thus, we compute an invertible matrix $P=\begin{pmatrix}1&-2&-3\\0&1&0\\0&0&1\end{pmatrix}$ such that
\begin{equation*}
P^{-1}\epsilon_1P=\begin{pmatrix}1&&\\&0&\\&&0\end{pmatrix}, \quad P^{-1}\epsilon_2P=\begin{pmatrix}0&&\\&1&\\&&1\end{pmatrix}.
\end{equation*}
Then $A_1, A_2, A_3$ are simultaneously block diagonalized via congruence as
\begin{equation*}
    P^TA_1P=\begin{pmatrix}
				1&&\\
				&4&10\\
				&10&24
	\end{pmatrix}, \quad 
    P^TA_2P=\begin{pmatrix}
				1&&\\
				&2&6\\
				&6&16
	\end{pmatrix}, \quad 
    P^TA_3P=\begin{pmatrix}
			1&&\\
			&3&10\\
			&10&28
	\end{pmatrix},
\end{equation*}
respectively. We continue to simultaneously diagonalize their submatrices via congruence, which are represented as
\begin{equation*}    
    B_1=\begin{pmatrix}4&10\\10&24\end{pmatrix}, \quad B_2=\begin{pmatrix}2&6\\6&16\end{pmatrix}, \quad B_2=\begin{pmatrix}3&10\\10&28\end{pmatrix}.
\end{equation*}
Their center is calculated as
\begin{equation*}
   Z(B_1, B_2, B_3) = \left\{\begin{pmatrix}4a+b&4a\\-a&b\end{pmatrix}\vline\, a, b \in\C\right\}
\end{equation*}
and we obtain its base matrices as
\begin{equation*}    
    X_4=\begin{pmatrix}4&4\\-1&0\end{pmatrix}, \quad X_5=\begin{pmatrix}1&0\\0&1\end{pmatrix}.
\end{equation*}
One can verify by direct computation that there are no nontrivial idempotents in $Z(B_1, B_2, B_3)$. This also follows from the center's algebraic structure $Z(B_1, B_2, B_3)\cong \C[\varepsilon]/(\varepsilon^2)$. Thus, $B_1, B_2, B_3$ are not simultaneously diagonalizable via congruence, and $A_1, A_2, A_3$ are simultaneously block diagonalizable via congruence as shown above. }
\end{example}

\begin{example}\label{exp:orth_block_three}
\emph{Let real symmetric matrices be
\begin{equation*}
    A_1=\begin{pmatrix}
        -9 & 4 & 12 \\
        4 & 10 & 3 \\
        12 & 3 & -16
    \end{pmatrix}, \quad 
    A_2=\begin{pmatrix}
        16 & 8 & 12 \\
        8 & 5 & 6 \\
        12 & 6 & 9
    \end{pmatrix}, \quad 
    A_3=\begin{pmatrix}
        41 & -4 & 12 \\
        -4 & 20 & -3 \\
        12 & -3 & 34
    \end{pmatrix}.
\end{equation*}
We compute their center according to Equation~\eqref{eqn:center_multiple} as
\begin{equation*}
    Z(A_1, A_2, A_3) = \left\{ \begin{pmatrix} a+9b & 0 & -12b \\ 0 & a & 0 \\ -12b & 0 & a+16b \end{pmatrix} \vline\, a, b \in \R \right\}
\end{equation*}
and work out two base matrices
\begin{equation*}
    X_1 = \begin{pmatrix} 1 & 0 & 0 \\ 0 & 1 & 0 \\ 0 & 0 & 1 \end{pmatrix}, \quad 
    X_2 = \begin{pmatrix} 9 & 0 & -12 \\ 0 & 0 & 0 \\ -12 & 0 & 16 \end{pmatrix}.
\end{equation*}
To perform the orthogonal SBDC, we impose the symmetric condition on a non-trivial idempotent element from $Z(A_1, A_2, A_3)$. Since $X_2^2=25X_2$, we normalize $X_2$ and obtain a pair of orthogonal idempotents
\begin{equation*}
\epsilon_1=\begin{pmatrix} \frac{9}{25} & 0 & -\frac{12}{25} \\ 0 & 0 & 0 \\ -\frac{12}{25} & 0 & \frac{16}{25} \end{pmatrix}, \quad \epsilon_2 = \begin{pmatrix} \frac{16}{25} & 0 & \frac{12}{25} \\ 0 & 1 & 0 \\ \frac{12}{25} & 0 & \frac{9}{25} \end{pmatrix}.
\end{equation*}}

\emph{
Thus, we compute an orthogonal matrix $P = \frac{1}{5}\begin{pmatrix} 3 & 0 & 4 \\ 0 & 5 & 0 \\ -4 & 0 & 3 \end{pmatrix}$ such that
\begin{equation*}
P^{-1}\epsilon_1P=\begin{pmatrix}1&&\\&0&\\&&0\end{pmatrix}, \quad P^{-1}\epsilon_2P=\begin{pmatrix}0&&\\&1&\\&&1\end{pmatrix}.
\end{equation*}
By simultaneously block diagonalizing $A_1, A_2, A_3$ via orthogonal congruence, we obtain
\begin{equation*}
    P^T A_1 P = \begin{pmatrix} -25 &  &  \\  & 10 & 5 \\  & 5 & 0 \end{pmatrix}, \quad 
    P^T A_2 P = \begin{pmatrix} 0 &  &  \\  & 5 & 10 \\  & 10 & 25 \end{pmatrix}, \quad
    P^T A_3 P = \begin{pmatrix} 25 &  &  \\  & 20 & -5 \\  & -5 & 50 \end{pmatrix},   
\end{equation*}
which may be further simultaneously diagonalized via orthogonal congruence. We denote their bottom-right submatrices as
\begin{equation*}    
    B_1=\begin{pmatrix} 10 & 5 \\ 5 & 0\end{pmatrix}, \quad B_2=\begin{pmatrix} 5 & 10 \\ 10 & 25\end{pmatrix}, \quad B_2=\begin{pmatrix} 20 & -5 \\ -5 & 50\end{pmatrix},
\end{equation*}
respectively, and compute their center as
\begin{equation*}
   Z(B_1, B_2, B_3) = \left\{\begin{pmatrix} a & 0\\ 0& a\end{pmatrix}\vline\, a \in\R \right\}.
\end{equation*}
Since a nontrivial idempotent does not exist in $Z(B_1, B_2, B_3)\cong \R$, they are not simultaneously diagonalizable via congruence, and $A_1, A_2, A_3$ are simultaneously block diagonalizable via orthogonal congruence as shown above.}
\end{example}

\section{Simultaneous block diagonalization of Hermitian matrices}\label{sec:hermitian}


In this section, we extend our results on the SBDC of a set of symmetric matrices to the SBDC of a set of Hermitian matrices. We briefly recall the definition of Hermitian matrices, see~\cite{horn2012matrix} for more details.

\begin{definition} \label{def:hermitian}
\emph{A matrix $A \in \C^{n \times n}$ is Hermitian if $A^{*}=A$, where $A^{*}$ denotes the conjugate transpose of $A$.}
\end{definition}

Given a set of Hermitian matrices $A_1, A_2, \dots, A_m \in \C^{n \times n}$, we are considering the problem of finding the finest simultaneous block diagonalization via $*$-congruence
\begin{align*}
	P^{*}A_iP =\begin{pmatrix}
	   B_{i1}&&\\
    	&\ddots&\\
		&&B_{it}
	\end{pmatrix}
\end{align*}
for all $1 \le i \le m$, where $P \in GL_{n}(\C)$ is an invertible matrix, $B_{ij}\in \C^{n_j \times n_j}$ and $\sum_{j=1}^{t}n_{j}=n$. Note that the center of a set of symmetric matrices defined in Equation~\eqref{eqn:center_multiple} can be naturally extended to that of a set of Hermitian matrices.



\begin{definition} \label{def:center_hermitian}
\emph{Let $A_1, A_2, \dots, A_m \in \C^{n \times n}$ be a set of Hermitian matrices. The center of these matrices is defined as
\begin{equation}\label{eqn:center_hermitian}
    Z(A_1, A_2, \dots, A_m) := \{X \in \C^{n \times n} \mid (A_iX)^{*}=A_iX, \ 1 \le i \le m \}.
\end{equation}
}
\end{definition}


It is clear that the center of a set of Hermitian matrices is a linear space over the real field $\R$ and contains all scalar matrices. Due to their similar algebraic structures, we may apply center $Z(A_1, A_2, \ldots, A_m)$ in Equation~\eqref{eqn:center_hermitian} to simultaneously block diagonalize a set of Hermitian matrices via $*$-congruence.  

\begin{theorem}\label{thm:hermitian}
Suppose $A_1, A_2, \ldots, A_m \in \C^{n \times n}$ are a set of Hermitian matrices and $Z(A_1, A_2, \ldots, A_m)$ is the center of these matrices. Then
\begin{itemize}
    \item[{(1)}] $(Z(A_1, A_2, \ldots, A_m), \odot)$ is a real special Jordan algebra.
    \item[{(2)}] There is a one-to-one correspondence between simultaneous block diagonalizations of $A_1, A_2, \ldots, A_m$ via (unitary) $*$-congruence and complete sets of (Hermitian) orthogonal idempotent elements of $Z(A_1, A_2, \ldots, A_m)$.
    \item[{(3)}] $A_1, A_2, \ldots, A_m$ are not simultaneously block diagonalizable via $*$-congruence if and only if $Z(A_1, A_2, \ldots, A_m)$ has no nontrivial idempotent elements.
    \item[{(4)}] $A_1, A_2, \ldots, A_m$ are simultaneously diagonalizable via $*$-congruence if and only if $Z(A_1, A_2, \ldots, A_m)$ has a complete set of $n$ orthogonal idempotent elements.
    \item[{(5)}] The finest simultaneous block diagonalization for $A_1, A_2, \ldots, A_m$ via $*$-congruence is unique up to equivalence.
\end{itemize}
\end{theorem}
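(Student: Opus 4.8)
The plan is to mirror the proof of Theorem~\ref{thm:center} almost verbatim, replacing transpose by conjugate transpose and congruence by $*$-congruence throughout, since the center of a set of Hermitian matrices is defined by the analogous condition $(A_iX)^*=A_iX$. First I would establish item (1): for $X,Y\in Z(A_1,\dots,A_m)$ the symmetric product $X\odot Y=\tfrac12(XY+YX)$ again lies in the center. The verification is the same computation as in the symmetric case, carried out with $*$ in place of $T$; the only point to note is that the center is a vector space over $\R$ (not $\C$), because the defining condition $(A_iX)^*=A_iX$ is $\R$-linear but not $\C$-linear in $X$ (multiplying $X$ by $i$ introduces a sign change under conjugation). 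Hence $(Z,\odot)$ is a real special Jordan algebra, a Jordan subalgebra of the full matrix Jordan algebra restricted to $\R$.

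Next I would prove the correspondence in item (2), which is the heart of the theorem and implies (3), (4), and (5) exactly as in Theorem~\ref{thm:center}. The first step is to record how centers transform under $*$-congruence: if $B_i=P^*A_iP$ for an invertible $P$, then for any $Y\in Z(B_1,\dots,B_m)$ we have $(B_iY)^*=B_iY$, i.e. $Y^*P^*A_iP=P^*A_iPY$, which after multiplying by $(P^*)^{-1}$ on the left and $P^{-1}$ on the right shows $PYP^{-1}\in Z(A_1,\dots,A_m)$; consequently $Z(A_1,\dots,A_m)=PZ(B_1,\dots,B_m)P^{-1}$. In the forward direction, if the $A_i$ are simultaneously block diagonalized by $P$, then the conformal block indicator matrices $e_j$ form a complete set of orthogonal idempotents of $Z(B_1,\dots,B_m)$, and conjugating by $P$ transports them into $Z(A_1,\dots,A_m)$. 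In the converse direction, given a complete set of orthogonal idempotents $\epsilon_1,\dots,\epsilon_t$ in the center, one uses that idempotents are diagonalizable and pairwise commuting, hence simultaneously diagonalizable by some invertible $P$ with $P^{-1}\epsilon_jP=e_j$; setting $B_i=P^*A_iP$, the identity $B_ie_j=(B_ie_j)^*=e_j^*B_i^*=e_jB_i$ forces the $B_i$ to be block diagonal.

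For the unitary refinement of item (2), I would follow Corollary~\ref{cor:orthogonal}: restrict to complete sets of \emph{Hermitian} orthogonal idempotents. When $P$ is unitary we have $P^{-1}=P^*$, so $*$-congruence and similarity coincide and the transported idempotents $Pe_jP^*$ remain Hermitian; conversely, a set of Hermitian, mutually orthogonal, pairwise commuting idempotents can be simultaneously diagonalized by a unitary $P$ (spectral theorem), and the same block-diagonalization argument then shows $P^*A_iP$ is block diagonal via unitary $*$-congruence. Items (3) and (4) follow immediately from (2) by taking the trivial idempotent set $\{I_n\}$ and the maximal set of $n$ rank-one idempotents respectively, and item (5) follows from the uniqueness up to equivalence of complete sets of primitive orthogonal idempotents in a unital Jordan algebra, citing \cite{mccrimmon2003taste}.

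The main obstacle I anticipate is being careful about the scalar field: one must consistently track that $(Z,\odot)$ is a Jordan algebra over $\R$ rather than $\C$, so the Jordan-algebraic facts invoked for items (1) and (5) (Peirce decomposition, existence and uniqueness of primitive idempotent decompositions) must be the versions valid for real Jordan algebras. A second, more technical point is that in the converse direction of the unitary case one needs the idempotents to be simultaneously \emph{unitarily} diagonalizable, which requires them to be Hermitian (hence normal) in addition to commuting; this is precisely why the unitary correspondence is with Hermitian idempotents, and I would flag that the spectral theorem, not merely commuting-diagonalizability, is what is used there.
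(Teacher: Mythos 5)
Your proposal is correct and takes exactly the approach the paper intends: the paper's own proof of Theorem~\ref{thm:hermitian} consists of the single remark that it is ``similar to Theorem~\ref{thm:center} and Corollary~\ref{cor:orthogonal},'' and your argument spells out precisely that adaptation, replacing $T$ by $*$, tracking that the center is a Jordan algebra over $\R$ rather than $\C$, and correctly identifying that the unitary refinement requires Hermitian idempotents and the spectral theorem. Nothing further is needed.
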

\begin{proof}
Similar to Theorem \ref{thm:center} and Corollary \ref{cor:orthogonal}. 
\end{proof}

The SBDC of a set of Hermitian matrices follows the same procedure as the SBDC of a set of symmetric matrices. Thus, we adopt Algorithm~\ref{alg:diagonal} for the SBDC of a set of Hermitian matrices and provide two examples to illustrate the method. 
Example~\ref{exp:herm_diag} shows the SDC of three Hermitian matrices, and Example~\ref{exp:herm_block} illustrates the SBDC of two Hermitian matrices. The unitary SBDC of three Hermitian matrices is given in Example~\ref{exp:unitary_block_three}.

\begin{example}\label{exp:herm_diag}
\emph{Consider the following Hermitian matrices
\begin{equation*}
    A_1=\begin{pmatrix}	1&1+i\\	1-i&1 \end{pmatrix}, \quad A_2=\begin{pmatrix}	2&2+2i\\2-2i&7 \end{pmatrix}, \quad A_3=\begin{pmatrix}	-2&-2-2i\\	-2+2i&1 \end{pmatrix}.
\end{equation*}
According to Equation~\eqref{eqn:center_hermitian}, their center is calculated as
\begin{equation*}
Z(A_1,A_2,A_3)=\left \{\begin{pmatrix}	a+b&a+bi\\	0&b \end{pmatrix} \vline\, a,b\in \mathbb{R}  \right \}.
\end{equation*}
For unitary $*$-congruence, we require $A_1,A_2,A_3$ to commute with each other. A direct calculation gives
\[
    A_1 A_2 = \begin{pmatrix} 4 & 9+9i \\ 8-8i & 8 \end{pmatrix}, \quad A_2 A_1 = \begin{pmatrix} 4 & 4+4i \\ 9-9i & 11 \end{pmatrix}.
\]
Since $A_1A_2\ne A_2A_1$, they do not commute and thus $A_1,A_2,A_3$ are not simultaneously diagonalizable via unitary $*$-congruence.
We then compute a pair of orthogonal idempotents for general congruence based on $Z(A_1,A_2,A_3)$ as
\begin{equation*}
    \epsilon_1=\begin{pmatrix}	0&-1-i\\	0&1 \end{pmatrix}, \quad \epsilon_2=\begin{pmatrix}	1&1+i\\	0&0 \end{pmatrix}.
\end{equation*}
}

\emph{According to Theorem~\ref{thm:hermitian}, there exists an invertible matrix $P$ such that
\begin{equation*}
    P^{-1}\epsilon_1P=\begin{pmatrix}1&0\\0&0 \end{pmatrix}, \quad P^{-1}\epsilon_2P=\begin{pmatrix}0&0\\0&1 \end{pmatrix},
\end{equation*}
where
\begin{equation*}
    P=\begin{pmatrix}	1+i&1\\	-1&0 \end{pmatrix}.
\end{equation*}
Thus, we can simultaneously diagonalize these Hermitian matrices via $*$-congruence as
\begin{align*}
    P^{*}A_1P &=\begin{pmatrix}	1+i&1\\	-1&0 \end{pmatrix}\begin{pmatrix}		1&1+i\\	1-i&1 \end{pmatrix} \begin{pmatrix}		1+i&1\\	-1&0 \end{pmatrix}=\begin{pmatrix}	-1&0\\	0&1 \end{pmatrix}, \\
    P^{*}A_2P &=\begin{pmatrix}	1+i&1\\	-1&0 \end{pmatrix}\begin{pmatrix}		2&2+2i\\2-2i&7 \end{pmatrix} \begin{pmatrix}		1+i&1\\	-1&0 \end{pmatrix}=\begin{pmatrix}	3&0\\	0&2 \end{pmatrix}, \\
    P^{*}A_3P &=\begin{pmatrix}	1+i&1\\	-1&0 \end{pmatrix}\begin{pmatrix}		-2&-2-2i\\	-2+2i&1 \end{pmatrix} \begin{pmatrix}		1+i&1\\	-1&0 \end{pmatrix}=\begin{pmatrix}	5&0\\	0&-2 \end{pmatrix}.
\end{align*}}
\end{example}

\begin{example}\label{exp:herm_block}
\emph{Given two Hermitian matrices
\begin{equation*}
    A_1=\begin{pmatrix}	0&-2-2i&-3-2i\\	-2+2i&-2&4+6i\\-3+2i&4-6i&11 \end{pmatrix}, \quad A_2=\begin{pmatrix}	-2&-2-5i&-3-i\\	-2+5i&-3&11+6i\\-3+i&11-6i&19 \end{pmatrix}.
\end{equation*}
We compute their center as
\begin{equation*}
    Z(A_1,A_2)=\left \{\begin{pmatrix}	a+8b&6bi&-6bi\\	3bi&a-b&3b\\-2bi&2b&a \end{pmatrix} \vline\, a,b \in \R  \right \}.
\end{equation*}
By imposing the Hermitian condition on $Z(A_1,A_2)$, the only solution is $X=\lambda I$ for some $\lambda\in\C$. Thus, $A_1$ and $A_2$ are not simultaneously block diagonalizable via unitary $*$-congruence. We then obtain a pair of orthogonal idempotents
\begin{equation*}
    \epsilon_1=\begin{pmatrix}	6&6i&-6i\\	3i&-3&3\\-2i&2&-2 \end{pmatrix}, \quad 
    \epsilon_2=\begin{pmatrix}	-5&-6i&6i\\	-3i&4&-3\\2i&-2&3 \end{pmatrix}.
\end{equation*}
An invertible matrix $P_1$ is calculated such that
\begin{equation*}
    P_1^{-1}\epsilon_1P_1=\begin{pmatrix}1&&\\&0&\\&&0 \end{pmatrix}, \quad
    P_1^{-1}\epsilon_2P_1=\begin{pmatrix}0&&\\&1&\\&&1 \end{pmatrix},
\end{equation*}
where
\begin{equation*}
    P_1=\begin{pmatrix}		6&-3i&2i\\	3i&2&-1\\-2i&-1&1 \end{pmatrix}.
\end{equation*}
Now simultaneously block diagonalize $A_1$ and $A_2$ via $*$-congruence as
\begin{equation*}
    P_1^{*}A_1P_1=\begin{pmatrix}	2&&\\	&-1&1+i\\&1-i&1  \end{pmatrix}, \quad 
    P_1^{*}A_2P_1=\begin{pmatrix}	1&&\\	&-1&2+i\\&2-i&2  \end{pmatrix},
\end{equation*}
respectively.}

\emph{Next, see if the transformed matrices may be further simultaneously diagonalized via $*$-congruence. We denote their submatrices as
\begin{equation*}
    B_1 = \begin{pmatrix}		-1&1+i\\1-i&1  \end{pmatrix}, \quad B_2 = \begin{pmatrix}		-1&2+i\\2-i&2  \end{pmatrix},
\end{equation*}
respectively. The center of the submatrices is calculated as
\begin{equation*}
    Z(B_1,B_2)=\left \{\begin{pmatrix}	a+b-bi&-bi\\-b&a+bi \end{pmatrix} \vline\, a,b\in \R \right \}.
\end{equation*}
By direct computation, the existence of a nontrivial orthogonal idempotent in $Z(B_1, B_2)$ requires 
\begin{equation*}
    a=\dfrac{\sqrt{3}i+3}{6}, \ b=-\dfrac{\sqrt{3}i}{3} \quad \text{or} \quad a=\dfrac{-\sqrt{3}i+3}{6}, \ b=\dfrac{\sqrt{3}i}{3}.
\end{equation*}
Since such $a,b\notin \mathbb{R}$, a nontrivial idempotent 
does not exist. Indeed, this can also be seen by the fact that $Z(B_1,B_2)\cong \C$ as $\R$-algebras. Thus, $B_1,B_2$ are not simultaneously diagonalizable via $*$-congruence and $A_1,A_2$ are simultaneously block diagonalizable via $*$-congruence.}
\end{example}

\begin{example}\label{exp:unitary_block_three}
\emph{Consider the following Hermitian matrices
\begin{equation*}
    A_1=\begin{pmatrix}
        9 & -6i & 3i \\
        6i & 9 & -6i \\
        -3i & 6i & 9
    \end{pmatrix}, \quad 
   A_2=\begin{pmatrix}
        -4 & -2 & 5 \\
        -2 & 8 & -2 \\
        5 & -2 & -4
    \end{pmatrix}, \quad 
    A_3=\begin{pmatrix}
        15 & -6 & 6 \\
        -6 & 6 & 12 \\
        6 & 12 & 6
    \end{pmatrix}.
\end{equation*}
First, we compute their center as
\begin{equation*}
    Z(A_1, A_2, A_3) = \left\{ \begin{pmatrix} a+4b & 2b & 4b \\ 2b & a+b & 2b \\ 4b & 2b & a+4b \end{pmatrix} \vline \, a, b \in \R \right\}
\end{equation*}
and obtain a pair of Hermitian orthogonal idempotents
\begin{equation*}
    \epsilon_1 = \frac{1}{9}\begin{pmatrix} 4 & 2 & 4 \\ 2 & 1 & 2 \\ 4 & 2 & 4 \end{pmatrix} \quad \epsilon_2 = \frac{1}{9}\begin{pmatrix} 5 & -2 & -4 \\ -2 & 8 & -2 \\ -4 & -2 & 5 \end{pmatrix}.
\end{equation*}
This yields an invertible matrix
    $P = \frac{1}{3}\begin{pmatrix}
        2 & 2 & i \\
        1 & -2 & 2i \\
        2 & -1 & -2i
    \end{pmatrix}$
such that
\begin{equation*}
    P_1^{-1}\epsilon_1P_1=\begin{pmatrix}1&&\\&0&\\&&0 \end{pmatrix}, \quad
    P_1^{-1}\epsilon_2P_1=\begin{pmatrix}0&&\\&1&\\&&1 \end{pmatrix}.
\end{equation*}
}

\emph{We can now simultaneously block diagonalize $A_1,A_2,A_3$ via unitary $*$-congruence as
\begin{equation*}
    P^* A_1 P = \begin{pmatrix}
        9 & 0 & 0 \\
        0 & 9 & 9 \\
        0 & 9 & 9
    \end{pmatrix}, \quad
    P^* A_2 P = \begin{pmatrix}
        0 & 0 & 0 \\
        0 & 0 & -9i \\
        0 & 9i & 0
    \end{pmatrix}, \quad
    P^* A_3 P = \begin{pmatrix}
        18 & 0 & 0 \\
        0 & 18 & 0 \\
        0 & 0 & -9
    \end{pmatrix}.
\end{equation*}
We then verify if their bottom right submatrices
\begin{equation*}
    B_{1} = \begin{pmatrix} 9 & 9 \\ 9 & 9 \end{pmatrix}, \quad
    B_{2} = \begin{pmatrix} 0 & -9i \\ 9i & 0 \end{pmatrix}, \quad
    B_{3} = \begin{pmatrix} 18 & 0 \\ 0 & -9 \end{pmatrix}
\end{equation*}
can be simultaneously diagonalized via $*$-congruence. Their center is calculated as
\begin{equation*}
   Z(B_1, B_2, B_3) = \left\{\begin{pmatrix} a & 0\\ 0& a\end{pmatrix}\vline\, a \in\R \right\},
\end{equation*}
which cannot produce nontrivial idempotent elements. Therefore, $B_1, B_2, B_3$ are not simultaneously diagonalizable via congruence, and the unitary SBDC of $A_1, A_2, A_3$ is the finest decomposition.}
\end{example}

\section*{Use of AI tools declaration}
The authors declare they have not used Artificial Intelligence (AI) tools in the creation of this article.

\section*{Acknowledgements}

Supported by Key Program of Natural Science Foundation of Fujian Province (Grant no. 2024J02018) and National Natural Science Foundation of China (Grant no. 12371037).

\section*{Conflict of interest}
The authors declare no conflicts of interest.

\begin{spacing}{.88}
\setlength{\bibsep}{2.pt}
\bibliographystyle{abbrvnat}
\bibliography{simultaneous_block}

\end{spacing}
\end{document}